\newcommand{\y}{{{\bf y}}}
\newcommand{\kk}{{{\bar k}}}
\newcommand{\Fb}{{\bf F}}
\newcommand{\Gb}{{\bf G}}
\newcommand{\Mb}{{\bf M}}
\newcommand{\Cb}{{\bf C}}
\newcommand{\Z}{{\mathbb Z}}
\newcommand{\R}{{\mathbb R}}
\newcommand{\Q}{{\mathbb Q}}
\newcommand{\Pp}{{\mathbb P}}
\newcommand{\CC}{{\mathbb C}}
\newcommand{\A}{{\mathcal A}}
\newcommand{\B}{{\mathcal B}}
\newcommand{\Cc}{{\mathcal C}}
\newcommand{\D}{{\mathcal D}}
\newcommand{\Ee}{{\mathcal E}}
\newcommand{\Ll}{{\mathcal L}}
\newcommand{\PP}{{\mathcal P}}
\newcommand{\OO}{{\mathcal O}}
\newcommand{\Xc}{{\mathcal X}}
\newcommand{\p}{{\mathfrak p}}
\newcommand{\fd}{{\mathfrak d}}
\newcommand{\op}{{\rm ord_\p}}
\newtheorem{thm}{Theorem}[section]
\newtheorem{lema}[thm]{Lemma}
\newtheorem{ques}[thm]{Question}
\newtheorem{conj}[thm]{Conjecture}
\newtheorem{prop}[thm]{Proposition}
\numberwithin{equation}{section}
\begin{document}

	\title[On powerful values of polynomials ]{On powerful values of polynomials   over  number fields}
	
	\subjclass[2020]{Primary 	11R09; Secondary 11J97.}
	\keywords{Powerful values of polynomials, Vojta's conjecture on bounded degree algebraic numbers}
	\author{Sajad Salami}
	\address{Institute of Mathematics,  Statistics, State University of Rio de Janeiro,  Rio de Janeiro, Brazil}
	\email{sajad.salami@ime.uerj.br}
\date{\today}

\maketitle
%%%%%%%%%%%%%%%%%%%%%%%%%%%%%%%%%%%%%%%%%%%%%%%%%%%%%%%%%5555
\begin{abstract}
	
	Let  $\B=\{b_i \}_{i=1}^\infty$ be a fixed  sequence  of  pairwise distinct elements of  a  number field $k$.
Given the integers $2\leq s \leq r$, 
assuming  a quantitative version of  Vojta's conjecture  on  the  bounded degree algebraic numbers on a number field $k$, we provide lower and upper bounds  for the cardinal  number  of  $\Gb_{r,s}^{\B_M}$ the set of
 polynomials $f\in k[x]$ of  degree $r\geq 2$ whose irreducible  factors have multiplicity strictly less than $s$  and 
$f(b_1),\cdots,  f(b_M)$ are nonzero $s$-powerful elements in $k$, where  $M=2r^2+6r +1$ if $r=s$, and $2sr^2+ s r+1$ otherwise.
Moreover, considering certain conditions on $\B$, we show  the existence of an integer $M_0> M$ such that no polynomial in $\Gb_{r,s}^{\B_M}$  takes  $s$-powerful values at all  of $b_1, \cdots, b_n $ for  $n\geq M_0$.
\end{abstract}

%%%%%%%%%%%%%%%%%%%%%%%%%%%%%%%%%%%%%%%%%%%%%%%%
\section{Introduction and main results}\label{Intm2}
Let $k$ be a number field and denote by  $\OO_k$  the ring of integers in $k$. 
Given integer  $ s\geq 2$,  an  element $\alpha$ of  $\OO_k$ 
 is called $s$-{\it powerful} if  for each prime ideal $\p $ of $\OO_k$ dividing  the principle ideal
 $(\alpha)$ we have  $\op(\alpha)\geq s$.
 For convenience, we suppose that $0$ is an $s$-powerful element for any $s\geq 2$.
This definition immediately extends to  elements of $k$. Clearly, any $s$-power in $k$  is an $s$-powerful element.
Similarly, given a polynomial $f \in  k[x]$ of degree $r\geq s $, we say that $f$ is an {\it $s$-powerful polynomial} 
 if each irreducible factor of $f$ has multiplicity at least $s$. It is clear that any $s$-power  polynomial 
 in $k[x]$ is $s$-powerful.

 The powerful values of polynomials have been studied by several authors in the literature,
  \cite{Davenport}, \cite{Riben1974},  \cite{Schinzel},   \cite{Walsh1999}. In the recent work 
	\cite{Pasten2013}, H. Pasten  considered the problem  for  number fields as well as  function fields.
%In Section \eqref{pasten},  we state his results in the case of  number fields after  setting some backgrounds and preliminaries.
Indeed,  he showed that a certain conjecture by Vojta on Diophantine approximation for number
field extensions of bounded degree, i.e.,   Conjecture \ref{Vojta2}(i) in Sec. \ref{VCAP}, 
 implies  the finiteness of the set of all monic polynomial $f \in k[x]$ of degree $r\geq 2$ whose irreducible factors have multiplicity strictly less than $s\leq r$ and  all $f(b_1), \cdots f(b_M)$ are $s$-powerful for   mutually distinct fixed elements $b_i \in k$, where  $k$ is a fixed number field and  
 	${\bar M}=2r^2+9r+1,$ if $r=s$; and ${\bar M}=2sr^2 + (2s+1)r+1$ otherwise, see \cite[Thm. 2.1]{Pasten2013}.  
 As a corollary, he concluded the existence of   positive constants $M_0$ such that if  $f(1), \cdots, f(M_0)$ are $s$-powerful for some polynomial $f\in k[x]$, then  it has a factor with multiplicity  at least $s$, see \cite[Cor. 2.2]{Pasten2013}.
 He also tried to  drive consequences in logic related to Hilbert’s tenth problem, such as the  B\"uchi's $n$-square problem.

 In this paper, we assume the  quantitative  version of the Vojta's  conjecture  on algebraic points of bounded degree over number field, see Conjecture  \ref{Vojta2}(ii)  in Section \ref{VCAP}.
We fix an arbitrary sequence  $\B=\{ b_1, b_2, \cdots\}$   of pairwise distinct elements in $k$.
Given integers $2 \leq s \leq r < n$, we let $\Fb_{r,s}^{\B_n}$ to  be the set of all monic  polynomials $f\in k[x]$ of degree $r$  such that  $f(b_i)$ is a nonzero  $s$-powerful element in $k$ for each $b_i \in \B_n$, where 
 $\B_n=\{b_1, \cdots, b_n\} \subset \B$.
%We identify two polynomials $f, g \in \Fb_{r,s}^{\B_n}$, if $f=\alpha g$ for some $s$-powerful element $\alpha \in k$.
	Denote by $\Gb_{r,s}^{\B_n}$ the subset of  $\Fb_{r,s}^{\B_n}$ containing polynomials for which all irreducible factors have multiplicity strictly less than $s$. 
	% Note that the complement  of $\Gb_{r,s}^{\B_n}$  in $\Fb_{r,s}^{\B_n}$  consists of  all 
% $f\in k[x]$  that takes $s$-powerful values for each element of $\B_n$ and has at least one factor with multiplicity $\geq s$.
It is clear that   $\Fb_{r,s}^{\B_{n+1}} \subseteq \Fb_{r,s}^{\B_n}$  and hence $\Gb_{r,s}^{\B_{n+1}} \subseteq \Gb_{r,s}^{\B_n}$ for all $n\geq 1$. 
 One can think about $\Gb_{r,s}^{\B_n}$ as the set of exceptions to the rule:
{\it if $f$ takes powerful values too many times then $f$ has factors with  exponents $\geq s$.}

The following theorem is the main result of this paper.
\begin{thm}
\label{mainpol1}
Assume   the   Vojta's Conjecture \ref{Vojta2}(ii). 
Given integers $2\leq s \leq r$,  let
$M:=2r^2+6r +1$ if $r=s$, and $2sr^2+ s r+1$ otherwise.  
Then  $\Gb_{r,s}^{\B_M}$ is a finite set.
Moreover,  there exist positive constants $C_0$ and $C_1$,  depending  on $r$, $s$, fundamental quantities of $k$  and  elements of $\B_M$, 
but  it is independent of the polynomial $f \in \Gb_{r,s}^{\B_M}$, 
such that  
   $$ C_0 \leq \# \Gb_{r,s}^{\B_M} \leq C_1.$$
\end{thm}
%------------------------------------------

We remark  that the number $M$ given by this theorem 
    depends only on  the integers $r$ an $s$, but it is independent
 of the sequence $\B$ and its subset $\B_M.$
  In contrast, the proof of Theorem \ref{mainpol1}, which is similar to  \cite[Thm. 2.1]{Pasten2013},  shows that  
$C_0$ and $C_1$    depend on $b_i \in \B_M$, the integers $r$,  $s$, and  basic quantities of the number field  $k$.
%In particular, one has $C_1>0$ if $k=\Q$, and  it  goes to the infinity when   degree of $k$ tents to  infinity.
%It is remarkable that the number  $M$ given by our result is smaller that one   in \cite[Thm. 2.1]{Pasten2013}.
We remark that  the number $M$ in  Theorem \ref{mainpol1}  is
smaller than  one given  in \cite[Thm. 2.1]{Pasten2013}. For,  we  used a  different estimation of the height of logarithmic  discriminate of  $k$.
Our result also provides a lower and an upper bound for the set under study, which is not provided in \cite{Pasten2013}.

Now,
let us   consider   sequences  $\Cc  =\{c_i\}_{i=1}^{\infty}, $  $\D =\{d_i\}_{i=1}^{\infty},$ and 
 $\Ee =\{e_i \}_{i=1}^{\infty},$ associated  to the given fixed sequence $\B=\{b_i\}_{i=1}^{\infty}$,  which are defined by
$$c_i:=b_{i+1}-b_i, \ d_i:=b_i/b_{i+1}, \ e_i:=1/b_{i+1}-1/b_i, \ ( i=1,2, \cdots ).$$
As a consequence of the   Theorem \ref{mainpol1},     following  result  shows that
 if we restrict our attention to the    sequences  $\Cc$, $\D$ and $\Ee$ in Theorem \eqref{mainpol1},
 then we can get rid of the set $\Gb_{r,s}^{\B_n} $ for a large enough  $n$.
\begin{thm}	
\label{cor1}
Assume   the part (ii)  of Vojta's Conjecture \eqref{Vojta2}. 
Let $M$ denote the integer  given by  Theorem \ref{mainpol1}. 
If any of  sequences $\Cc, \D,$ and $\Ee$ is  periodic    with period  $m\geq 1$, then 
 there exists an  integer $M_0>0$, depending on $r$, $s$, fundamental quantities of  $k$ and elements of $\B_M$, such that  
$\Gb_{r,s}^{\B_n} = \emptyset$  for $n> M_0$.
\end{thm}
%--------------------------------------

Since all the   sequences $\Cc$, $\D$   and $\Ee$ are periodic with period $m=1$ for the sequence $\B=\{1,2,\cdots\}$, so   Theorem  \ref{cor1} implies  \cite[Cor. 2.2]{Pasten2013}.
% where it is considered only  the monic polynomials.

The structure of this paper is as follows. In Sec. \ref{anbdh}, we give the preliminaries on  height functions over number fields and
 also the main result of \cite{Ih2008} that will use in  proof of \ref{mainpol1}.
In Sec. \ref{VCAP},  we provide some definitions and terminologies of   Diophantine approximation  to ba able
for  stating   the equivalent versions  of  Vojta's conjecture 
  on bounded degree algebraic points over number fields. 
The section  contains   proofs of  \ref{mainpol1} and  \ref{cor1}.
 
%===================================================================================================
%\section{Value distribution over number fields}\label{VDNF}
%=========================================================
\section{Preliminaries on heights functions}
\label{anbdh}

Given a number field $k$ with   algebraic closure   $\kk$,   we let  $\PP_k$ denote the set of places of $k$ that 
 splits into two disjoint subsets. One, $\PP_k^0$ the set of the finite places, \emph{i.e.},
those corresponding to prime ideals $\p$ of $\OO_k$,
and another one of the infinite places denoted by $\PP_k^\infty$, \emph{i.e.},
those corresponding to real embedding $\sigma : k \hookrightarrow \R $, called the real infinite places, 
union with those corresponding to  pair of conjugate embedding $\sigma , \bar{\sigma}: k \hookrightarrow \CC$ 
that are  called the complex infinite places.
 %those extending the infinite place of $\Q$.
 For any $v \in \PP_k$, denote by $\| \cdot \|_v$  its  associated  almost absolute value defined by 
 %$\| \cdot \|_v$ is defined by 
\[\| \alpha \|_v  := 
\begin{cases}
0 & \text{if  $\alpha=0$ },\\
\# (\frac{\OO_k}{\p_v})^{\op_v (\alpha)} & \text{if $v \in \PP_k^0$ corresponds to a prime $ \p_v \in Spec(\OO_k)$},\\
|\sigma(\alpha)| &  \text{if $v \in \PP_k^\infty $ is  a real infinite place},\\
|\sigma(\alpha)|^2 &  \text{if $v \in \PP_k^\infty $ is a complex infinite place}.
\end{cases}
\]

 Given $P=[\alpha_0: \cdots : \alpha_n] \in \Pp_k^n$, the {\it   multiplicative} and {\it logarithmic} heights  
 are defined  by 
$$H_k(P):= \prod_{v \in \PP_k} \max \{\| \alpha_0 \|_v,  \cdots, \| \alpha_n \|_v\},  $$
$$h_k(P):= \log H_k(P)= \sum_{v \in \PP_k}^{} \log \max \{\| \alpha_0 \|_v,  \cdots, \| \alpha_n \|_v \}.$$
%where $\log^+ x=\max\{\log x,0\}$ for any  $x\in\mathbb R_{>0}$.
Then,  the  {\it   multiplicative} and {\it logarithmic  heights}  of any $\alpha \in k$  are defined by 
 $H_k(\alpha)=H_k([1:\alpha])$ and $h_k(\alpha)h_k([1:\alpha])$,  where we identified $k$ with affine space in $\Pp_k^1$.
For any finite extension $K|k$,  $\alpha \in k$, and $P \in \Pp_k^n$, one has
$$H_k(\alpha)= H_K(\alpha)^{1/[K:k]}, \  h_k(\alpha)=\frac{1}{[K:k]} h_K(\alpha), $$
$$H_k(P)= H_K(P)^{1/[K:k]}, \  h_k(P)=\frac{1}{[K:k]} h_K(P).$$
Considering these facts, one may extend the definition of height function  to $\Pp_\kk^n$.
In this case, they are called the  {\it absolute multiplicative and additive Weil heights}  and denoted by 
$H(P)$ and $h(P)$, respectively. We note that the action of Galois group of $\Pp_\kk^n$ leaves the absolute  heights invariant.
Moreover, for each $\alpha, \beta \in \kk^*$ and $n \in \Z$, one has the followings:
\begin{equation}
\label{hhha}
h(\alpha ^{n}) = |n|h(\alpha), \ h(\alpha \beta) \leq h(\alpha)+ h(\beta), \  h(\alpha +\beta)\leq h(\alpha) + h(\beta) + \log 2.
\end{equation}
For any polynomial  $ f(x)= a_0  +a_1x + \cdots + a_d x^d \in k[x],$ the {\it absolute multiplicative} and {\it additive heights} are  defined by  
$$H(f):= H([a_0: a_1: \cdots : a_d]), \   \  h(f):= h([a_0: a_1: \cdots : a_d].$$  

%--------------------------------------------

Giving  lower and upper bounds for the cardinal number of  the set of  bounded degree algebraic points on a projective line is started by  \cite{Schmidt1993} and continued by \cite{Schmidt1995},  \cite{Masser2007}, \cite{Ih2008}, and so on.
Let us  recall the main result in  \cite{Ih2008} that we will use in the proof  of \ref{mainpol1}.

 Denote  by $N(\Pp_\kk^1; r; T)$ the number of points  $\alpha \in \Pp_\kk^1$ of degree at
 most $r$ and  $h(\alpha)\leq T$ for every constant $T>0$ and integer $r\geq 2$.
Let $Cl_k$ be the class number of $k$,  $\text{Reg}_k$ the regulator of $\OO_k^*$, $w_k$ the number of roots of unity
in $k$, $\zeta_k(s)$ the Dedekind zeta-function of $k$, $\fd_k$ the absolute discriminant of
 $k$, $m_1$ the number of real embedding of $k$,  $m_2$ the number of pairs of complex embedding of $k$,
 and $m=m_1+2m_2$ is the degree of $k$ over $\Q$. For more details on these quantities, we refer the reader to \cite{Neukirch1999}.
Define 
\begin{equation}
\label{shan-const}
a_{k,r}:= \frac{Cl_k \cdot \text{Reg}_k}{w_k \zeta_k(r+1)} \cdot 
\big(\frac{2^{m_1}(2\pi)^{m_2}}{\fd_k^{1/2}}\big)^{r+1} \cdot (r+1)^{m_1+m_2-1},
\end{equation}
and denote $b_{k,r}:= r\cdot a_{k,r} \cdot T ^{mr(r+1)}$ and $T_1=T^{mr(r+1)-r}$.

\begin{thm} 
\label{SuIH} Notation being as above,  for each    $\varepsilon >0 $ one has 
$$b_{k,r}\cdot 2^{-mr(r+1)} T^{mr(r+1)} - O_\varepsilon (T_1\cdot T^\varepsilon) \leq N(\Pp_\kk^1; r; T) 
\leq b_{k,r} \cdot  2^{mr(r+1)}  + O(T_1).$$
In particular,  
$$2^{-mr(r+1)} + o(1) \leq  \frac{N(\Pp_\kk^1; r; T) }{b_{k,r}} \leq  2^{mr(r+1)} + o(1)  \ \ \textit{as} \ \   T \rightarrow \infty.$$
\end{thm}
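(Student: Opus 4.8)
The plan is to pass from geometric points of $\Pp_\kk^1$ to closed points of $\Pp_k^1$, then to their minimal polynomials viewed as $k$-rational points of a projective space, and finally to count those by Schanuel's theorem; the constant $a_{k,r}$ is nothing but the Schanuel constant of $\Pp_k^r$, and both the factor $r$ and the exponent $mr(r+1)$ will drop out of this chain of identifications, the $2^{\pm mr(r+1)}$ slack arising only from crude height comparisons. First I would separate the count of geometric points from that of closed points. Every $\alpha\in\Pp_\kk^1$ of degree $\le r$ over $k$ lies in the $\gal(\kk/k)$-orbit of a unique closed point of $\Pp_k^1$, a closed point of degree exactly $d$ having an orbit of size $d$ whose members all share the same absolute Weil height; hence
\begin{equation}
N(\Pp_\kk^1;r;T)=\sum_{d=1}^{r} d\cdot N_d(T),\qquad N_d(T):=\#\{\text{closed points of }\Pp_k^1\text{ of degree }d\text{ with height}\le T\}.
\end{equation}
A crude Northcott-type upper bound with explicit dependence on the invariants of $k$ gives $N_d(T)=O(T^{md(d+1)})$; since $m(r-1)r\le mr(r+1)-r$ for every $m\ge1$, all terms with $d\le r-1$ are $O(T_1)$, and it remains to estimate $r\,N_r(T)$ up to an additive $O(T_1)$.

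Next I would trade closed points for polynomials. A closed point of $\Pp_k^1$ of degree $d$ off the point at infinity is the same as an irreducible $f\in k[x]$ of degree $d$ up to scaling by $k^*$, i.e.\ a point $[f]\in\Pp_k^d$ lying off the reducibility locus (the finitely many points at infinity are negligible). If $\alpha$ is a root of $f$ and $\alpha=\alpha_1,\dots,\alpha_d$ are its $k$-conjugates, then the $d$-th symmetric power morphism $\mathrm{Sym}^d\colon(\Pp^1)^d\to\Pp^d$ satisfies $(\mathrm{Sym}^d)^*\OO(1)=\OO(1,\dots,1)$, so by Weil's height machine $h([f])=\sum_i h(\alpha_i)+O_d(1)=d\,h(\alpha)+O_d(1)$; concretely the non-archimedean part of this identity is Gauss's lemma and the archimedean part is the comparison between the coefficient norm and the Mahler measure of a degree-$d$ polynomial, which by \eqref{hhha}-type inequalities costs at most a factor $2^{O(d)}$ per infinite place. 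Passing to $k$-heights via $H_k([f])=e^{m\,h([f])}$, the condition $h(\alpha)\le T$ becomes $H_k([f])\le X$ for some $X$ with $T^{md}\,e^{-O(md)}\le X\le T^{md}\,e^{O(md)}$.

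Then I would invoke Schanuel's theorem, $\#\{P\in\Pp_k^d:H_k(P)\le X\}=a_{k,d}\,X^{d+1}+O\!\big(X^{d+1-1/m}\big)$, the leading constant being precisely the Schanuel constant $a_{k,d}$ of \eqref{shan-const}; for $d=r$ and $X\asymp T^{mr}$ the remainder is $O(T_1)$ (this is where $r\ge2$ keeps the logarithm from appearing). For the upper bound I would simply overcount, replacing $N_r(T)$ by the full count on $\Pp_k^r$, which yields $r\,N_r(T)\le b_{k,r}\cdot 2^{O(mr(r+1))}+O(T_1)$. For the lower bound I must subtract the reducible locus: a reducible $f$ over $k$ factors as $f_1\cdots f_t$ with $t\ge2$, so $[f]$ lies in the image of a product $\Pp_k^{d_1}\times\cdots\times\Pp_k^{d_t}$ on which the height splits multiplicatively (again up to $2^{O(r)}$, by Gauss's lemma), and an inductive estimate with partial summation bounds the number of such $[f]$ of height $\le X$ by $O(X^{r}\log^{O(1)}X)=O(T_1\,T^{\varepsilon})$; this gives $r\,N_r(T)\ge b_{k,r}\cdot 2^{-O(mr(r+1))}-O_\varepsilon(T_1\,T^{\varepsilon})$. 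Collecting the $d<r$ terms and pinning the exponents in the $2^{O(mr(r+1))}$ factors down to the asserted $2^{\pm mr(r+1)}$ from the Gauss's lemma/Mahler-measure constants yields the two-sided bound; dividing through by $b_{k,r}$ gives the ``in particular'' statement.

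I expect the main obstacle to be the combination of the height identity in Step 2 with the bookkeeping of constants: one must make the comparison between the height of a closed point and the height of its minimal polynomial completely explicit across all places of $k$ and control the multiplicative error, because it is exactly this $O_r(1)$ in $h([f])=r\,h(\alpha)+O_r(1)$ that, after being raised to the $(r+1)$-st power in Schanuel's formula and multiplied by $m$ in passing from absolute to $k$-heights, turns into the factor $2^{\pm mr(r+1)}$ in the conclusion. The secondary difficulty is arranging that all error sources — the lower-degree closed points, the reducibility locus in $\Pp_k^r$, and Schanuel's own remainder — simultaneously fit under $O(T_1)$ on the upper side and under $O_\varepsilon(T_1\,T^{\varepsilon})$ on the lower side, which is precisely what forces the shape $T_1=T^{mr(r+1)-r}$ and uses $r\ge2$.
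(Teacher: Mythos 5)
This statement is not proved in the paper at all: it is quoted verbatim as the main result of the cited article of Ih (\cite{su-ion}), so there is no in-paper proof to compare against. Your sketch follows exactly the standard route of that cited proof (Schmidt--Ih): pass from degree-$\le r$ points of $\Pp_\kk^1$ to closed points, then to minimal polynomials viewed in $\Pp_k^r$, apply Schanuel's theorem with constant $a_{k,r}$, absorb lower-degree points and the reducible locus into the error terms, and let the Gauss's lemma/Mahler-measure comparison $h([f])=r\,h(\alpha)+O(r)$ produce the $2^{\pm mr(r+1)}$ factors and Schanuel's remainder produce $T_1=T^{mr(r+1)-r}$; this is correct in outline and is essentially the argument of the source. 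One small point worth flagging: the paper states the count with the logarithmic height ($h(\alpha)\le T$), which is inconsistent with the polynomial-in-$T$ bounds; your silent reinterpretation of $T$ as a bound on the multiplicative height is the intended reading and is what makes the exponents come out right.
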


Without loos of generality, we may suppose that  $k \subset \CC$ and 
\begin{equation} 
\label{polinc}
f(x)=a_0 +a_1x+ \cdots + a_d x^d =a_d \prod_{j=1}^d (x-\alpha_j) \in \CC[x].
\end{equation}
In this case, the {\it Mahler measure} of any $f\in \CC[x]$  is  defined by
$$M(f):= |a_d| \cdot \prod_{j=1}^{d} \max \{ 1, |\alpha_j|\},  $$
where $|\cdot |$ is the usual absolute value on $\CC$.
For $\alpha \in \kk=\CC$,   its Mahler measure is given  by
 $M(\alpha)=M(f_\alpha)$  where  $f_\alpha \in k[x]$ denotes its minimal polynomial.

 The {\it  logarithmic discriminant } of $k$ is  defined by  $d_k := \log \fd_k /[k:\Q].$ 
For  a  tower of number fields $\Q \subseteq k \subseteq K \subset \kk $ with absolute discriminant $\fd_k$ and $\fd_K$, respectively,
the {\it  relative logarithmic discriminant} of $K|k$ is   
$$d_k(K):= \frac{1}{[K:k]} \log \fd_{K/k}- \log \fd_k,$$
where   $\fd_{K/k}$ is the {\it relative discriminant } of the extension $K|k$.
The {\it relative logarithmic discriminant} of each  $\alpha \in \kk $
 is defined by $d_k(\alpha):=d_k(k(\alpha)).$ 
 The following proposition   gives  an upper bound for the logarithmic discriminant $d_k(\alpha)$ that 
we will use in the proof of \ref{mainpol1}. For a proof, we cite  to  \cite{Mahler1964, BERCZES}.
\begin{prop} 
\label{discupb}

Let $k \subset \CC$ be a number field and
$f\in k[x]$ be a polynomial of degree  $d\geq 2$ 
of the form  
\[ 
 	\label{polinc}
 	f(x)=a_0 +a_1x+ \cdots + a_d x^d =a_d \prod_{j=1}^d (x-\alpha_j) \in \CC[x].
  \] 
Define $A(d)=d \log d$ if $k=\Q$, and  $A(d)=(2d-1)\log d$ otherwise. Then,
\begin{itemize}
\item [(i)]  $D(f) = a_d^{2d-2} \prod_{i>j}(\alpha_i -\alpha_j)^2, $  and  $  |D(f)| \leq d^d \cdot M(f)^{2d-2};$
\item [(ii)] If $D(f) \not = 0$, then   $h(D(f)) \leq  2(d-1)h(f) + A(d);$
\item [(iii)] If  $\alpha \in \kk$ is of degree $d \geq 2$,  then
$d_k(\alpha) \leq  2(d-1)h(\alpha)+ A(d).$
\end{itemize}
\end{prop}
%-------------
\begin{proof}
	See    \cite[Thm. 1]{Mahler1964} for part (i). 
The part (ii) is consequence of part (i)
 for the case  $k=\Q$, and it is given by   \cite[Lem. 3.7]{BERCZES} when $k \not = \Q$.
The part (iii) is   given by \cite[Prop. 1.6.9]{Bombieri2006} in the case $k=\Q$; and generally it comes from part (ii).
\end{proof}
%---------------------

%===============================================================
\section{ Vojta's conjecture on   bounded degree algebraic points}
%Diophantine approximation on extensions of bounded degree
\label{VCAP}
In  this section, we briefly review  the basic definitions  
and results on   Diophantine approximation  over number fields.
For more details, one can refer  to  \cite{Vojta1987, Vojta2011}.
Then, we state the equivalent versions   of   well-known 
 Vojta's conjecture on   bounded degree algebraic points  over  number fields.

 Given    a finite set  $S \subset \PP_k$ containing $\PP_k^\infty$, and 
the  distinct elements $b, \alpha \in k$,
the   {\it proximity functions} with  respect to  $S$ are defined by 
$$m_S(\alpha):= \sum_{v \in S}^{} \log^+ \| \alpha \|_v, \ \text{and} \
m_S(b,\alpha):= m_S(\frac{1}{\alpha-b}).$$
Similarly, the {\it counting functions} with respect to the set $S$ are  defined by
$$N_S(\alpha):= \sum_{v \not \in S}^{} \log^+ \| \alpha\|_v, \ \text{and} \ N_S(b,\alpha):= N_S(\frac{1}{\alpha-b}).$$
By the properties of logarithm function, for any $\alpha \in k $ one has
\begin{equation}
\label{fmt}
m_S(\alpha) + N_S(\alpha) = \sum_{v  \in \PP_k}^{} \log^+ \| \alpha \|_v = h(\alpha ), 
\end{equation}
which  is an  analogue of first main theorem in classic Value Distribution Theory (or equivalently  Nevanlinna Theory).
The proximity and counting function of any $\alpha \in \kk \backslash k$ are defined as
$$m_{S}(\alpha):= \frac{1}{[K:k]}\cdot m_{T}(\alpha),  \ \text{and} \  N_{S}(\alpha):= \frac{1}{[K:k]}\cdot N_{T}(\alpha),$$
where  $K$ is any finite extension of $k$ containing $k(\alpha)$.
These definitions are independent of the choice of the extension $K.$
For an element $b \in k(\alpha)$  distinct from $\alpha$, one can also define 
\[
\label{porx-count}
m_{S}(b, \alpha):= \frac{1}{[k(\alpha):k]}\cdot m_{T}(b, \alpha),  \ \text{and} \  N_{S}(b, \alpha):= \frac{1}{[k(\alpha):k]}\cdot N_{T}(b, \alpha).
\]
It is easy to see that $h(\alpha)= m_S(\alpha)+N_S(\alpha)$ for all $\alpha \in \kk$.

Here is the  Vojta's conjecture on  algebraic points  of bounded degree over number fields, see \cite{Vojta1987, Vojta2011} for more general version.

\begin{conj} 
	\label{Vojta0} 
	Let $k$ be a number field, $\kk$ its algebraic closure and  $S\subset \PP_k$ a finite set containing  $\PP_k^\infty$.
	Let $b_1, \cdots, b_n$ be pairwise distinct elements of $k$ and $d\geq 2$ an integer. Then, one has the following
	 equivalent statements:
	\begin{itemize}
	\item [(i)] For any $\epsilon >0 $, there exists a constant $c_\epsilon$ depending on $\epsilon$  and previous data, such that
	the inequality
		\[
		\label{vI0}
		\sum_{i=1}^n m_S(b_i,\alpha) \leq (2+ \epsilon ) h(\alpha) + d_k(\alpha) + c_\epsilon,
	\]
	holds for  all $\alpha \in \kk$ with   $[k(\alpha):k] \leq d$ and different from  all $b_i$'s.
	\item [(ii)] For any $\epsilon >0 $, the inequality
	\[
		\label{vI1}
		\sum_{i=1}^n m_S(b_i,\alpha) < (2+ \epsilon ) h(\alpha) + d_k(\alpha),
	\]
	holds for   all but finitely many $\alpha \in \kk$ with   $[k(\alpha):k] \leq d$ and different from   all $b_i$'s. 
	\end{itemize}
\end{conj}

 Using the qualities \ref{porx-count} and   following   proof \ref{fmt} as in \cite{Vojta2011}, 
one can see that  the inequality
\begin{equation}
\label{equi1}
h(\alpha)\leq m_S(b,\alpha)+N_S(b,\alpha) + h(b) + [k(\alpha):\Q] \cdot \log 2,
\end{equation}
holds for any $\alpha \in \kk$ and $b\in k$  distinct from $\alpha$.
Applying this inequality,  the conjecture \ref{Vojta0} can be restated  as follows.

\begin{conj}
\label{Vojta1}
Let $k$ be a number field, $\kk$ its algebraic closure and  $S\subset \PP_k$ a finite set containing  $\PP_k^\infty$.
Let $b_1, \cdots, b_n$ be pairwise distinct elements of $k$ and $d\geq 2$ an integer.  
Then, one has the following
equivalent statements:
\begin{itemize}
	\item [(i)] For any $\epsilon >0 $, there exists a constant $c_\epsilon $ depending on $\epsilon$  and previous data, such that
	the inequality
	\[\label{vI10}
	(n-2-\epsilon ) h(\alpha)  \leq d_k(\alpha) + \sum_{i=1}^n N_S(b_i, \alpha) +c_\epsilon,
	\]
		holds for  all $\alpha \in \kk$ with   $[k(\alpha):k] \leq d$ and different from  all $b_i$'s.
\item [(ii)] For  any $\epsilon >0 $, the inequality
\[
\label{vI11}
(n-2-\epsilon ) h(\alpha)  < d_k(\alpha) + \sum_{i=1}^n N_S(b_i, \alpha),
\]
holds for   all but finitely many $\alpha \in \kk$ with   $[k(\alpha):k] \leq d$ and different from $b_i$'s.
% where $c':= n(B+ d\cdot \log 2) $ and $ B:= \max \{h(b_i)\}_{i=1}^n.$
 	\end{itemize}
\end{conj}

The {\it truncated counting function } on $\kk$ is defined by 
$$N_{S}^{(1)}(b, \alpha):= \sum_{w  \in M_K^0}^{}\min \{1,\max \{0, \op_w (\alpha-b)\}\} \cdot \log (\# (\OO_K/\p_w)),,$$
where   $ b \in k$  is distinct from $\alpha \in  \kk$,
$K\supseteq k(\alpha)$  and  $\p_w \subset \OO_K$ is a prime ideal corresponding  to  $w \in M_K^0$ that lies  over some $v \in M_k \backslash S$.
Here is the  truncated version of the  Vojta's conjecture.
\begin{conj}
\label{Vojta2}
Let $k$ be a number field, $\kk$ its algebraic closure and  $S\subset \PP_k$ a finite set containing  $\PP_k^\infty$.
Let $b_1, \cdots, b_n$ be pairwise distinct elements of $k$ and $d\geq 2$ an integer. Then,
one has the following equivalent statements:
\begin{itemize}
	\item [(i)] For any $\epsilon >0 $, there exists a constant $c_\epsilon $ depending on $\epsilon$  and previous data, such that
	the inequality
	\[
	\label{vI20}
	(n-2-\epsilon ) h(\alpha)  \leq d_k(\alpha) + \sum_{i=1}^n N_S^{(1)}(b_i, \alpha) +c_\epsilon,
	\]
		holds for  all $\alpha \in \kk$ with   $[k(\alpha):k] \leq d$ and different from  all $b_i$'s.
	\item [(ii)]  For any $\epsilon >0$, the inequality
\[
\label{vI2}
(n-2-\epsilon ) h(\alpha)  < d_k(\alpha) + \sum_{i=1}^n N_S^{(1)}(b_i, \alpha), 
\]
holds for   all but finitely many  $\alpha \in \kk$ with   $[k(\alpha):k] \leq d$ and different from $b_i$'s. 
% where $$c':= n(B+ d\cdot \log 2) ,\ \text{and } B:= \max \{h(b_i)\}_{i=1}^n.$$
	\end{itemize}
\end{conj}

We notice that   conjecture \eqref{Vojta2} is a special case of a Vojta's general conjecture   
 on the bounded degree  algebraic points  on algebraic  varieties, see \cite[Conj. 25.1]{Vojta1998}.
 It is equivalent to the  non-truncated version \eqref{Vojta1}. Indeed, 
since $N_S^{(1)}(b, \alpha) \leq N_S(b, \alpha)$ holds by definitions, so the truncated version 
 \eqref{Vojta2} implies the non-truncated one   \eqref{Vojta1}.
 The converse is the special case of the  theorem (3.1)  in \cite{Vojta1998}, where a more general ABC conjecture is stated. 

% Here we recall the ABC conjeccture for number field that is assumed in \cite{Pasten2015} and is a special case of  Conjecture (\ref{Vojta2}(ii)).

% \begin{conj}	\label{ABC}
 %{\it (ABC for number fields)}. Let $k$ be a number field. Let $\epsilon>0$ and fix mutually distinct elements $b_1, \ldots, b_M \in K$. Let $S$ be a finite set of places of $k$. Then for all but finitely many $\alpha \in k$ one has
 %$$ (M-2-\epsilon) h_k(\alpha)<\sum_{i=1}^M N_{ S}^{(1)}\left( b_i, \alpha \right).$$
%\end{conj}

It is remarkable that  the finite sets  of elements in $ \kk$ of degree at most $d$
for which  the  inequalities  of part (ii) in   conjectures   \eqref{Vojta0} \eqref{Vojta1} and \eqref{Vojta2}
 do  not hold,  depends on $b_i$'s, $\epsilon$, $c$, $k$, and $d$.  In practice, determining  effectively  this finite set of elements   is very hard task.

%========================================================================
\section{Proof of the main results}
\label{mainproof1}
In this section,  we  give the proof of   Theorems \ref{mainpol1} and  \ref{cor1}.
%which  is essentially similar to    \cite[Thm 2.1]{Pasten2013} with more explicit  details.
%--------------------------
\subsection{Proof of Theorem \ref{mainpol1}}
%Let us recall some notation from the  first section.

Suppose  that $2 \leq s \leq r$ are  integers and  $b_1, \cdots, b_M \in k$ are distinct elements, where 
$M=2r^2+6r + 1$ and  $M=2sr^2+sr+1, $ otherwise.
%In order to prove   Theorem   \ref{mainpol1}, 
%Recall that the set  $\Gb_{r,s}^{\B_{\bf M}}$ contains  polynomials $f \in \Fb_{r,s}^{\B_{\bf M}}$  with  factorization
%$f=f_1^{s_1} \cdots f_t^{s_t}$  into irreducible factors over $k$  such that  $s_j < s$ for each
%$1\leq j \leq t$. 
%it is enough to show that the set $\Gb_{r,s}^{\B_{ M}}$ is finite with  $M={\bf M}-r$. 
%For  if $\Gb_{r,s}^{\B_{ M}}$ be finite set, then we can apply
%it to  all subset of size $M$ of the set  $\{  b_1, \cdots, b_{\bf M}\}$, say $G_1, \cdots, G_\ell$, where 
%$\ell=\binom{\bf M}{M'}$. Since any polynomial $f \in \Gb_{r,s}^{\B_{\bf M}}$ has at most $r$ roots in $k$, so it 5belongs to some of $G_i$'s and  hence 
%\begin{equation}
%	\label{veq000}
%	\# \Gb_{r,s}^{\B_{\bf  M}} \leq \sum_{i=1}^\ell \# G_i  \leq  \binom{M}{M-r} \cdot \Gb_{r,s}^{\B_{ M}} < \infty.
%\end{equation}
Then,   consider the subset    $\B_M:= \{b_1, b_2, \cdots, b_M\}$ of the sequence $\B= \{b_1, b_2, \cdots \}\subset k.$ 
 Let $f \in \Gb_{r,s}^{\B_{M}}$ with  factorization 
 $f=f_1^{s_1}\cdots f_t^{s_t}$,  where $f_j\in k[x]$ are  monic irreducible  polynomial of degree $d_j:=\deg(f_j)$ and 
 define $s^+ =\max \{ s_1, \cdots, s_t\}$.
For each $j=1, \cdots, t$,  let $\alpha_j \in \kk$ be an arbitrary root of $f_j$, 
 $k_j:=k(\alpha_j)$ and  $g:=f_1 \cdots f_t$, which is  of degree $d:= d_1+\cdots + d_t$. 
Let $S\subset \PP_k$ be a finite  subset of $\PP_k$  that is  the union of the sets $\PP_k^\infty$,
 poles of each  $b_i\in \B_{M}$, and  the places above which two or more   of  $ b_j'$s meet.
 Note that $\alpha_j \neq b_i$ for all $i$  and $j,$ because $f(b_j) \neq 0$. 

By Vojta's  conjecture  \ref{Vojta2}(ii) with   $b_i\in  \B_{M}$, the set $S$,  and integer $r\geq 2$, we conclude that  
  for  any given $\epsilon >0 $ the following inequality
\begin{equation}
\label{veq1}
(M-2-\epsilon ) h(\alpha) <  d_k(\alpha) +\sum_{i=1}^{M} N_S^{(1)}(b_i, \alpha)
% +c +  c', 
\end{equation}
holds for   all but finitely many $\alpha \in \kk$ with   $[k(\alpha):k] \leq r $ and  $\alpha\not = b_i$'s. 
%where   $$c':=  M(B+ r\cdot \log 2),\  \text{and} \ B:= \max \{h(b_i): \ 1 \leq i \leq M\}  .$$
%Thus,There are a finite number of elements in $ \kk$ of degree at most $r$
%for which  \ref{veq1} does not hold. 
 Let us  denote by $N_{k,r}^{\B_M}$ the set of such elements  $\alpha \in \kk$ for which   \eqref{veq1} does not hold,
  and denote its cardinal number by $n_{k,r}^{\B_M}$. 
Since we are going to estimate $\# \Gb_{r,s}^{\B_{M}}$, so for a while we ignore the polynomials $f \in \Gb_{r,s}^{\B_{M}}$ 
that have some roots in the set $N_{k,r}^{\B_M}$. 
We recall them in the moment of estimating    $\# \Gb_{r,s}^{\B_{M}}$.

Thus, assuming Vojta's  conjecture  \ref{Vojta2}(ii), for any  $\epsilon >0 $
\begin{equation}
\label{veq1a}
(M-2-\epsilon ) h(\alpha_j)  <  d_k(\alpha_j) +\sum_{i=1}^{M} N_S^{(1)}(b_i, \alpha_j),
\end{equation}
where  $\alpha_j$ is  a root of $f_j$ for each $j=1,\cdots, t.$ 
Applying  the part (ii) of  Theorem \ref{discupb} to each of $\alpha_j$'s and using $d_j\leq d$, 
 leads to
\begin{equation}
	\label{veq1aa}
	d_k(\alpha_j)\leq 2(d_j-1)h(\alpha_j)  + A(d_j)\leq  2(d-1)h(\alpha_j)  + A(d),
	\end{equation}
where $A(d)=d \log d$ if $k=\Q$ and $(2d-1) \log d$ otherwise, for any integer $d \leq r$.
Substituting  \eqref{veq1aa} in   \eqref{veq1a}, and using the fact that $A(d)\leq A(r)\leq 2r \log r$ leads to
\begin{equation}
\label{veq1b}
(M-2d-\epsilon ) h(\alpha_j) < \sum_{i=1}^M N_S^{(1)}(b_i, \alpha_j)  +  c_1, 
\end{equation}
where $c_1:=M(B+ r\cdot \log 2)  + 2r \log r.$
Then, multiplying the both side  with $d_j$ and  summing-up, one can   obtain that
\begin{equation}
\label{veq2}
\sum_{j=1}^{t}(M-2d-\epsilon)h_{k_j}(\alpha_j) <   \sum_{j=1}^{t} \sum_{i=1}^{M} d_j N_{S}^{(1)} (b_i, \alpha_j) 
 + rc_1. 
\end{equation}
We are going  to give an upper bound for the term involving truncated function in  \eqref{veq2}. 
But to do this,  we need the following lemma.
\begin{lema}
\label{veq0}
Let  $D(g)$ be the   discriminant  of  polynomial $g=f_1\cdots f_t$ of degree $d\geq 2$ and let
$A(d)$  be as above. Then
$$h(D(g)) \leq 2(d-1) \sum_{j=1}^t h_{k_j}(\alpha_j)  + 4d(d-1) +A(d).$$
\end{lema}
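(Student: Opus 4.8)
The plan is to apply Proposition~\ref{discupb}(ii) directly to the polynomial $g$ and then to bound $h(g)$ in terms of the heights of the roots of its irreducible factors. First I would observe that $g=f_1\cdots f_t$ is separable in $\kk[x]$: the $f_j$ are pairwise distinct irreducible polynomials over $k$, so $g$ is squarefree and hence $D(g)\neq 0$; moreover $\deg g=d\geq 2$. Thus Proposition~\ref{discupb}(ii) applies to $g$ and gives
\[
h(D(g))\ \leq\ 2(d-1)\,h(g)+A(d),
\]
which reduces the lemma to proving the estimate $h(g)\leq\sum_{j=1}^t h_{k_j}(\alpha_j)+2d$: indeed, this then yields $2(d-1)h(g)\leq 2(d-1)\sum_{j=1}^t h_{k_j}(\alpha_j)+4d(d-1)$, and adding $A(d)$ finishes the proof.

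To bound $h(g)$, for each $j$ I would let $\alpha_j=\alpha_{j,1},\dots,\alpha_{j,d_j}$ denote the roots of $f_j$ in $\kk$, that is, the complete set of $k$-conjugates of $\alpha_j$, so that $g=a\prod_{j=1}^t\prod_{i=1}^{d_j}(x-\alpha_{j,i})$ for some $a\in k^*$. Since the additive height of a polynomial depends only on its projective coefficient vector, $h(g)$ equals the height of the monic polynomial $\prod_{j,i}(x-\alpha_{j,i})$, and the standard comparison between the height of a monic polynomial and the heights of its roots — a consequence of Gauss's lemma at the finite places and Mahler's coefficient inequality $\max_i|a_i|\le 2^{d}M(\cdot)$ at the archimedean places — gives
\[
h(g)\ \leq\ \sum_{j=1}^t\sum_{i=1}^{d_j}h(\alpha_{j,i})\ +\ d\log 2 .
\]
Because Galois conjugates over $k$ share the same Weil height and, by the definitions recalled in Section~\ref{anbdh}, $\sum_{i=1}^{d_j}h(\alpha_{j,i})=d_j\,h(\alpha_j)=h_{k_j}(\alpha_j)$, this becomes $h(g)\leq\sum_{j=1}^t h_{k_j}(\alpha_j)+d\log 2$, which is more than enough since $d\log 2<2d$.

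Combining the two displays I would conclude
\begin{align*}
h(D(g)) &\ \leq\ 2(d-1)\sum_{j=1}^t h_{k_j}(\alpha_j)+2d(d-1)\log 2+A(d)\\
&\ \leq\ 2(d-1)\sum_{j=1}^t h_{k_j}(\alpha_j)+4d(d-1)+A(d),
\end{align*}
which is the asserted inequality. The only genuinely non-formal ingredient is the height-of-a-product estimate for $h(g)$; the one minor difficulty there is bookkeeping the archimedean constant while keeping the normalizations of $h$ and of $h_{k_j}(\cdot)$ mutually consistent with Section~\ref{anbdh}, so that the crude slack $2d$ (hence $4d(d-1)$ after the factor $2(d-1)$) is comfortably sufficient. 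Should one prefer not to invoke Proposition~\ref{discupb}(ii) for the composite $g$, an alternative is to use the factorization $D(g)=\pm\prod_{j}D(f_j)\cdot\prod_{j<l}\mathrm{Res}(f_j,f_l)^2$, bounding each $h(D(f_j))$ by Proposition~\ref{discupb}(ii) and each $h(\mathrm{Res}(f_j,f_l))$ by the usual height bound for resultants in terms of $h(f_j)$ and $h(f_l)$; this is slightly longer but leads to the same bound.
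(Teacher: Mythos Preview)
Your proof is correct and follows essentially the same route as the paper: apply Proposition~\ref{discupb}(ii) to $g$, then bound $h(g)$ by $\sum_j h_{k_j}(\alpha_j)+2d$ via the roots and Galois invariance of the height. The only difference is cosmetic---the paper first bounds each $h(f_j)$ and then uses a height-of-product inequality $h(f_1\cdots f_t)\le\sum_j h(f_j)+(d+t)\log 2$ to reach $h(g)$, whereas you pass directly to the monic form of $g$ and bound it in one step, even getting the slightly sharper constant $d\log 2$ before relaxing to $2d$.
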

\begin{proof}
We assume that $\alpha_{ji}$ are the roots of $f_j$ for $1 \leq i \leq d_j $.
Since the absolute heights are invariant by  the action of  Galois group of $\Pp_{\kk}^1$, 
so 
\begin{align*}
\sum_{j=1}^t h(f_j) & \leq \sum_{j=1}^t( \sum_{i=1}^{d_j}  h(\alpha_{ji}) + (d_j-1)\log 2)\\
&  \leq \sum_{j=1}^t d_j h(\alpha_j) +   \sum_{j=1}^t (d_j-1)\log 2\\
& \leq  \sum_{j=1}^t h_{k_j}(\alpha_j) + (d-t)\log 2.
\end{align*}
Hence,  using the properties of  heights functions \cite[Prop. B.7.2]{Hindry2000}, we have 
\begin{align*}
h(g)=h(f_1\cdots f_t) &   \leq \sum_{j=1}^t [h(f_j) +(d_j+1)\log(2)] \\
& =\sum_{i=1}^t h(f_j)  + (d + t) \log 2 \leq \sum_{j=1}^t h_{k_j}(\alpha_j) + 2d  \log 2.
\end{align*}
By   Theorem \ref{discupb}(ii), we obtain the desired inequality, 
\[h(D(g))  \leq 2(d-1)h(g)+A(d)\leq 2(d-1) \sum_{j=1}^t h_{k_j}(\alpha_j)  + 4d(d-1)\log 2 +A(d).\]
\end{proof}

Let $\D$ be the reduced  divisor on $Spec (\OO_k)$ whose support consists of the union of $S$, zeros of $D(g)$, and   poles of  $\alpha_j$'s.

\begin{lema}
\label{veq0a}
With notation as above,  we have:
\begin{equation}
	\label{eeqq1}
\sum_{j=1}^{t} \sum_{i=1}^{M}  d_jN_S^{(1)} (b_i, \alpha_j) \leq 
\big [  \frac{Ms^+}{s} +d(2d-1) \big ]\sum_{j=1}^{t}  h_{k_j}(\alpha_j)  +r c_2,
\end{equation}
where  % $s^+:=\max \{s_1, \cdots, s_t\}$, $d=d_1 + \cdots+ d_t$,
$$c_2:= M(B+\log 2) + \sum_{\p \in S} \log \# (\OO_k/\p) +2r \log r+ A(r).$$
\end{lema}
\begin{proof}
By  changing the  order of sums in   left-hand side  of \ref{veq2}
 and following the last part of the proof of   \cite[Lemma 4.9]{Pasten2013},  we have
\begin{align*}
\sum_{i=1}^{M} \sum_{j=1}^{t} d_jN_{S}^{(1)} (b_i, \alpha_j) &\leq 
\frac{1}{s} \sum_{i=1}^{M} \sum_{j=1}^{t} s_j d_jh (b_i-\alpha_j) + d \deg(\D)\\
&  \leq \frac{1}{s}  \sum_{i=1}^{M} \big (\sum_{j=1}^{t} s_j d_j[h (b_i) + h(\alpha_j) + \log 2 ]\big)+ d \deg(\D).
\end{align*}
Since $t \leq r=\sum_{j=1}^{t} s_jd_j$,  and $s_j\leq s^+$,  so we have 
\begin{align*}
\sum_{i=1}^{M} \sum_{j=1}^{t} d_jN_{S}^{(1)} (b_i, \alpha_j) 
& \leq \frac{1}{s}  \sum_{i=1}^{M}[ \sum_{j=1}^{t} s_j d_j h(\alpha_j) + r (h(b_i) + \log 2)] + d \deg(\D)\\
& \leq \frac{M}{s} \sum_{j=1}^{t} s_j h_{k_j}(\alpha_j) + \frac{M r (B+ \log 2)}{s}  + d \deg(\D)\\
& \leq \frac{Ms^+}{s} \sum_{j=1}^{t}  h_{k_j}(\alpha_j) + M r (B+ \log 2)  + d \deg(\D).
\end{align*}
To give an upper bound on the $\deg (\D)$ in terms of $h(\alpha_j)$'s, 
we assume that  $S'$ and  $S_j$ are the  subsets of $ \PP_k^0$ such that $D(g)$ vanished at $\p$, 
 $\alpha_j$ has a pole above $\p$, respectively.  We let $S''$ to be the union of $S_j$ for $j=1,\cdots, t$. 
  Then, letting  $a(S) := \sum_{\p \in S} \log \# (\OO_k/\p)$, we have 
\begin{align*}
\deg(\D) &= \sum_{\p \in S''} \log \# (\OO_k/\p)  + \sum_{\p \in S'} \log \# (\OO_k/\p) + a(S) \\
& = \sum_{j=1}^t \sum_{\p \in S_j} \log \# (\OO_k/\p)  +\# S' + a(S)\\
& =  \sum_{j=1}^t h_{k_j}(\alpha_j)  +h(D(g)) +a(S). 
\end{align*}
 Using   \eqref{veq0},   and   $A(d)\leq A(r)$ for $d\leq r$,  we get that 
\begin{align*}
\deg(\D) & \leq  \sum_{j=1}^t h_{k_j}(\alpha_j) + 2(d-1) \sum_{ j=1}^t h_{k_j}(\alpha_j)  + a(S)  + A(r) + 4r(r-1) \\
& \leq  (2d-1 )\sum_{ j=1}^t h_{k_j}(\alpha_j) + a(S)  + A(r) + 4r(r-1).
  \end{align*}
Multiplying the last inequality by $d$, gives that
\[d\deg(\D)  \leq  d(2d-1 )\sum_{ j=1}^t h_{k_j}(\alpha_j) + r[a(S)  + A(r) + 4r(r-1)].\]
Putting  all of the above inequalities together leads to  desired one \eqref{eeqq1}. 
\end{proof}

By    Lemma \ref{veq0a}, one can rewrite  \eqref{veq2} as follows,
%\begin{align*}
%\sum_{j=1}^{t}(M-2d-\epsilon)h_{k_j}(\alpha_j) &\leq   \sum_{j=1}^{t} \sum_{i=1}^{M} d_j N_{S}^{(1)} (b_i, \alpha_j)  + r(c +  c_1)\\ 
%& \leq \big [  \frac{Ms^+}{s} +d(2d-1) \big ]\sum_{j=1}^{t}  h_{k_j}(\alpha_j)  +r(c+ c_1+ c_2),
%\end{align*}
%or equivalently,
\begin{equation}
\label{veq5}
\sum_{j=1}^{t}[M(1-\frac{s^+}{s})- 2d^2 -d- \epsilon]h_{k_j}(\alpha_j) <  r (c_1+c_2). 
\end{equation}
\begin{lema}
For integers $2\leq s \leq r$, let   $M=2r^2 +6r+1$ if $r=s$ and $2sr^2+sr+1$ otherwise.
Then, for each  $1\leq d \leq r$, we have 
\begin{equation}
\label{veq6}
M(1-\frac{s^+}{s})- 2d^2 -d\geq \frac{1}{r}.
\end{equation}
\end{lema}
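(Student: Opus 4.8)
The plan is to deduce \eqref{veq6} from two elementary constraints on the multiplicities $s_j$ and degrees $d_j$ of the irreducible factors of $f=f_1^{s_1}\cdots f_t^{s_t}\in\Gb_{r,s}^{\B_M}$. Since $f$ lies in $\Gb_{r,s}^{\B_M}$, every $s_j$ is strictly less than $s$, so $s^+=\max_j s_j\le s-1$ and therefore
$$1-\frac{s^+}{s}=\frac{s-s^+}{s}\ge\frac1s.$$
On the other hand, from $r=\deg f=\sum_{j=1}^t s_jd_j$ and $d=\sum_{j=1}^t d_j$, picking an index $j_0$ with $s_{j_0}=s^+$ and using $s_j\ge 1$ and $d_{j_0}\ge 1$, one gets
$$r=s^+d_{j_0}+\sum_{j\ne j_0}s_jd_j\ \ge\ s^+d_{j_0}+\sum_{j\ne j_0}d_j=(s^+-1)d_{j_0}+d\ \ge\ s^+-1+d,$$
so $s^+\le r-d+1$ and hence $1-\frac{s^+}{s}\ge\frac{s-r+d-1}{s}$. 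The first (crude) bound is the useful one when $d$ is small, the second when $d$ is close to $r$, and the proof will combine them according to the value of $d$.

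Next I would split along the definition of $M$. If $s<r$, then $M=2sr^2+sr+1$, and the crude bound already gives $M\bigl(1-\frac{s^+}{s}\bigr)\ge M/s=2r^2+r+\frac1s$; since $1\le d\le r$ forces $r^2-d^2\ge0$ and $r-d\ge0$, one concludes
$$M\Bigl(1-\frac{s^+}{s}\Bigr)-2d^2-d\ \ge\ 2(r^2-d^2)+(r-d)+\frac1s\ \ge\ \frac1s\ \ge\ \frac1r .$$

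If instead $s=r$, then $M=2r^2+6r+1$ and $M/r=2r+6+\frac1r$. For $d=1$ the crude bound suffices, since $M\bigl(1-\frac{s^+}{s}\bigr)-2d^2-d\ge M/r-3=2r+3+\frac1r\ge\frac1r$. For $d\ge 2$ I would invoke the second bound, which for $s=r$ reads $1-\frac{s^+}{s}\ge\frac{d-1}{r}$; then, using $r\ge d$,
$$M\Bigl(1-\frac{s^+}{s}\Bigr)\ \ge\ (2r+6)(d-1)+\frac{d-1}{r}\ \ge\ (2d+6)(d-1)+\frac{d-1}{r},$$
and since $(2d+6)(d-1)-(2d^2+d)=3(d-2)\ge 0$ for $d\ge2$, it follows that $M\bigl(1-\frac{s^+}{s}\bigr)-2d^2-d\ge\frac{d-1}{r}\ge\frac1r$, completing the argument.

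The one point that requires care is the realization that the crude estimate $1-\frac{s^+}{s}\ge\frac1s$ alone does \emph{not} yield \eqref{veq6} in the borderline regime $r=s$ with $d$ large (it already fails numerically at $r=s=3$, $d=3$); extracting the trade-off $s^+\le r-d+1$ from the identity $r=\sum_j s_jd_j$ is exactly what repairs this. Everything else reduces to the one-variable inequalities $(2d+6)(d-1)\ge2d^2+d$ for $d\ge2$ and $r^2\ge d^2$, $r\ge d$ on $1\le d\le r$, and the constant $M$ is chosen precisely so that the residual slack is at least $\frac1r$ (equality being attained, for instance, at $r=s=2$, $d=2$).
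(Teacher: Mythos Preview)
Your proposal is correct and follows essentially the same route as the paper: both derive the trade-off $r-s^+\ge d-1$ from $r=\sum_j s_jd_j$, then split into the cases $s<r$ (using the crude bound $1-\tfrac{s^+}{s}\ge\tfrac1s$) and $s=r$ (using the refined bound $1-\tfrac{s^+}{s}\ge\tfrac{d-1}{r}$). The algebraic manipulations in the $s=r$ case are arranged a bit differently---you expand $(2r+6)(d-1)$ and compare with $2d^2+d$, whereas the paper factors out $\tfrac{d-1}{r}$ and bounds $\tfrac{3r}{d-1}\le 3r$---but the two computations are equivalent; your separate treatment of $d=1$ when $s=r$ is an extra precaution that is in fact vacuous in context (if $d=1$ then $t=1$, $d_1=1$, so $r=s_1\le s^+\le s-1=r-1$, a contradiction).
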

\begin{proof}%  If  $f\in \Gb_{r,s}^{\B_M}$ with irreducible factorization 
%$f=f_1^{s_1}\cdots f_t^{s_t}$, then  $$s^+:=\max \{s_1, \cdots, s_t\} \leq s-1,$$ and hence
First, we have  $r-s^+\geq  d-1$. 
Indeed,   if  $j_0$ is an index such that $s_{j_0}=s^+$, then 
$$r=\sum_{j=1}^{t}s_j d_j \geq s^+ d_{j_0} + \sum_{j\not = j_0}^{t}d_j \geq s^+ + d_{j_0}-1 + \sum_{j\not = j_0}^{t}d_j =s^+ +d-1.$$
Thus,
\begin{equation}
\label{veq00}
1-\frac{s^+}{s} \geq 
\begin{cases} \frac{d-1}{r} &\mbox{if }  s=r \\ 
\frac{1}{s} & \mbox{otherwise}. 
\end{cases} 
\end{equation}
 In the case  $s=r$, since $M=2r^2+6r+1$ and  $r-s^+ \geq d-1\geq 1$, so
%555555(Note that if $d=1$ then  $s> s^+=r>s$, which is a contradiction),
\begin{align*}
M(1-\frac{s^+}{s})- 2d^2 -d & \geq  M(\frac{d-1}{r}) -2d^2 -d\\
 &\geq \frac{d-1}{r}(M -\frac{2rd^2+rd}{d-1} )\\
&  \geq \frac{d-1}{r}(M -2rd -3r - \frac{3r}{d-1} ).
\end{align*}
 The facts $-3r/(d-1)\geq -3r$  and $(d-1)/r \geq 1/r$ for $d-1\geq 1$,    implies that
\begin{align*}
M(1-\frac{s^+}{s})- 2d^2 -d & \geq   \frac{d-1}{r}(M-2rd -6r )\\
&\geq  \frac{1}{r} (M -2r^2 -6r ) \geq   \frac{1}{r}.
\end{align*}
In the case  $s< r$, by   $M=2sr^2+sr+1$  and   $1-s^+/s \geq 1/s$ we have:
 %$d \leq r$ and  $M -2sr^2-sr \geq 1$,  gives  that 
\begin{align*}
 M(1-\frac{s^+}{s}) -2d^2 - d &\geq  M/s -2d^2 - d \\
& \geq  \frac{1}{s}(M -2sd^2-sd)  \\
& \geq \frac{1}{r}(M -2sr^2-sr) \geq \frac{1}{r}.
\end{align*}
\end{proof}

Now, using \eqref{veq6}  in either cases,  the inequality \eqref{veq5} can be rewritten  as follows,
\begin{equation}
\label{veq7}
(\frac{1}{r}- \epsilon)\sum_{j=1}^{t}h_{k_j}(\alpha_j) <  r(c_1+c_2).
\end{equation}
Then, taking    $\epsilon: =1/(r+1), $ and $   c_3:=  r^2(r+1)(c_1+c_2),$
   the inequality \eqref{veq7} implies  that 
$$h(\alpha_j) < d_j h(\alpha_j)=h_{k_j}(\alpha_j) \leq \sum_{j=1}^{t}h_{k_j}(\alpha_j) < c_3. $$
We note that the consonant $c_3$ depends only on $r$, $s$, $k$  and $b_1, \cdots, b_M$, 
 but  it is independent of the polynomial $f \in \Gb_{r,s}^{\B_{M}}$. 
Let $N(\Pp_\kk^1; r; c_3)$ be the set of algebraic numbers $\alpha\in \kk$ of degree at most $r$ and height at most $c_3$, and denote by  $n(\Pp_\kk^1; r; c_3)$   its cardinal number.
By the famous Northcott's theorem \cite{Northcott1949},  $N(\Pp_\kk^1; r; c_3)$ is a positive number. Letting 
$c_4:=c_3^{mr(r+1)-r}$ with $m=[k:\Q]$, and  applying  Theorem \ref{SuIH}  assuming $\varepsilon:=1$,   $T:=c_3$  and  $T_1:=c_4$,
 gives us  two constants $c_5, c_6>0$,  depending on $r$, $s$ and $k$ but not on $b_i \in \B_M$,  such that
\[b_{k,r} \cdot 2^{-mr(r+1)}  + c_5 \cdot c_4 \cdot c_3 \leq N(\Pp_\kk^1; r; c_3) \leq 	b_{k,r} \cdot 2^{mr(r+1)}  + c_6 \cdot c_4,\]
where $b_{k,r} = r\cdot a_{k,r} \cdot c_3^{mr(r+1)}$  and  $a_{k,r}$ is given by  \ref{shan-const}.
Let $\A_{k,r}^{\B_M}$ be the union of    $N(\Pp_\kk^1; r; c_3)$ and  $N_{k,r}^{\B_M}$ defined in the beginning of the proof. Then
\[b_{k,r} \cdot 2^{-mr(r+1)}  + c_5 \cdot c_4 \cdot c_3   \leq \# \A_{k,r}^{\B_M}  \leq 	b_{k,r} \cdot 2^{mr(r+1)}  + c_6 \cdot c_4 + n_{k,r}^{\B_M}.\]
Since for each $f \in \Gb_{r,s}^{\B_{M}}$ has at most $r$ distinct roots in $\A_{k,r}^{\B_M}$, so we conclude that
\[b_{k,r} \cdot 2^{-mr(r+1)}  + c_5 \cdot c_4 \cdot c_3   \leq \# \Gb_{r,s}^{\B_{M}}  
\leq 	r \cdot (b_{k,r} \cdot 2^{mr(r+1)}  + c_6 \cdot c_4 + n_{k,r}^{\B_M}).\]
%Considering all of the subsets $\B'$ of $\B$ with size $M'$, and letting $\ell= \binom{M}{M-r}$
  Therefore, we  obtain the desired lower and upper bounds for $\# \Gb_{r,s}^{\B_{M}}$, i.e.,
 $C_0 \leq \# \Gb_{r,s}^{\B_{M}} \leq C_1$, where
	\[C_0:=b_{k,r} \cdot 2^{-mr(r+1)}  + c_5 \cdot c_4 \cdot c_3 + n_{k,r}^{\B_M}, \]
	and 
\[C_1:= r \cdot (b_{k,r} \cdot 2^{mr(r+1)}  + c_6 \cdot c_4 + n_{k,r}^{\B_M}).\]
This completes the proof of Theorem  \ref{mainpol1}.

%==================================================
\subsection{Proof of Theorem \ref{cor1}.}
\label{corproof}

To prove Theorem \ref{cor1}, we start  with the following  lemma.
% that will be used in  the proof of Theorem \ref{cor1}.
\begin{lema}
If any of the sequences $\Cc, \D$ and $\Ee$ is periodic with period $t \geq 1$, then
 for each $\ell=q t+p$ with $q \geq 0$ and $1\leq p \leq t$, we have:
\begin{itemize}
	\item[(i)] $c_\ell=c_p$ implies that  $b_\ell = b_p+ q(b_{t+1}-b_1),$ in particular,\\ $b_{qt}=b_t+(q-1)(b_{t+1}-b_1);$
	\item[(ii)] $d_\ell=d_p$ implies that $b_{\ell+1} =b_{p+1} (b_{t+1}/b_1)^q,$ 
	in particular, $b_{q t}=b_t (b_{t+1}/b_1)^{q-1};$
	\item[(iii)]  $e_\ell=e_p$ implies that $1/b_\ell = 1/b_p+ q(1/b_{t+1}-1/b_1),$ in particular, 
	\\ $1/b_{qt}=1/b_t+(q-1)(1/b_{t+1}-1/b_1).$
\end{itemize}
\end{lema}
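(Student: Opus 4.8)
The plan is to handle all three parts by the same mechanism: each of $\Cc$, $\D$, $\Ee$ records, respectively, the consecutive differences of $\{b_i\}$, the consecutive ratios $b_i/b_{i+1}$, and the consecutive differences of $\{1/b_i\}$, so every claim reduces to iterating a single ``period-shift'' identity obtained from a telescoping sum or product.

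First I would prove the elementary remark that if a sequence $\{c_i\}$ has period $m$, then for every $i \geq 1$ the block sum $\sum_{j=i}^{i+m-1} c_j$ equals $\sum_{j=1}^{m} c_j$, since the length-$m$ block sum is translation-invariant (passing from the block starting at $i-1$ to the one starting at $i$ merely replaces $c_{i-1}$ by $c_{i-1+m}=c_{i-1}$). Combined with the telescoping identities $b_{i+m} - b_i = \sum_{j=i}^{i+m-1} c_j$ and $b_{m+1} - b_1 = \sum_{j=1}^{m} c_j$, this gives the period-shift identity
\[ b_{i+m} = b_i + (b_{m+1} - b_1) \qquad (i \geq 1). \]
Part (i) then follows by iteration: applying this identity $q$ times starting from $b_p$ gives $b_\ell = b_{qm+p} = b_p + q(b_{m+1}-b_1)$, while applying it $q-1$ times starting from $i=m$ gives the displayed special case $b_{qm} = b_m + (q-1)(b_{m+1}-b_1)$.

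For part (ii) I would repeat the argument multiplicatively: from $d_{i+m}=d_i$ and $d_i = b_i/b_{i+1}$, a telescoping product gives $\prod_{j=i}^{i+m-1}(b_{j+1}/b_j) = \prod_{j=1}^{m}(b_{j+1}/b_j) = b_{m+1}/b_1$, hence the period-shift identity $b_{i+m} = b_i\,(b_{m+1}/b_1)$ for all $i\geq 1$; iterating it $q$ times from $b_{p+1}$ yields $b_{\ell+1} = b_{p+1}(b_{m+1}/b_1)^{q}$ (noting $\ell+1 = qm+(p+1)$), and iterating it $q-1$ times from $i=m$ yields $b_{qm} = b_m(b_{m+1}/b_1)^{q-1}$. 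Part (iii) is part (i) applied verbatim to the sequence $\{1/b_i\}$, whose consecutive differences are the $e_i$: the period-shift identity reads $1/b_{i+m} = 1/b_i + (1/b_{m+1}-1/b_1)$, and the two formulas follow by the same iteration.

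I do not anticipate a genuine obstacle; the only points needing care are the index bookkeeping --- tracking how many times the period-shift identity is applied and from which starting index, particularly for the ``in particular'' assertions --- and the standing assumption that the relevant $b_i$ are nonzero, which is implicit in the very definition of the sequences $\D$ and $\Ee$.
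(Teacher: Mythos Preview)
Your proposal is correct and follows essentially the same approach as the paper: telescoping the consecutive differences (resp.\ ratios, resp.\ reciprocal differences) over a block of length $m$ and using periodicity to identify every such block with the initial one. The paper phrases part (i) as a formal induction on $q$, whereas you isolate the single period-shift identity $b_{i+m}=b_i+(b_{m+1}-b_1)$ once and then iterate it, but the underlying computation is identical.
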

\begin{proof}
We prove just the part (i) of this lemma  by induction on $q$, and  leave the other cases to the reader.
We assume that $\ell=t+p$ and  $1 \leq p \leq t$. Then,  we have 
\begin{align*}
b_{t+p}-b_{t+1}& =c_{t+1} + c_{t+2} + \cdots +c_{t+p-1}\\
 & = c_1 + c_2 + \cdots + c_{p-1} =b_p-b_1.
\end{align*}
Hence, the assertion is true in the case $q=1$, i.e.,  $b_{t+p}-b_p=b_{t+1}-b_1$.
We assume that it is true for  $\ell=qt+p$ with $1\leq p \leq t$. Since 
\begin{align*}
b_{(q+1)t+p}-b_{qt+p}& =c_{qt+p} +  \cdots  + c_{(q+1)t}+ c_{(q+1)t+1}+\cdots+ c_{(q+1)t+p-1}\\
 & = c_p + c_{p+1} + \cdots + c_t+ c_1 + \cdots + c_{p-1} =b_{t+1}-b_1,
\end{align*}
so $b_{(q+1)t+p}=b_{qt+p}+ b_{t+1}-b_1$.   By  the  hypothesis of the induction,  we have  $b_{qt+p}= b_p+q(b_{t+1}-b_1)$ and hence
$b_{(q+1)t+p}-b_p=(q+1)(b_{t+1}-b_1)$ as desired.
Clearly, the general case implies the particular one.
\end{proof}

Given integers $2\leq s \leq r$, let $M$ be  the integer given by the theorem \ref{mainpol1}. 
Let $N_0 \#  \Gb_{r,s}^{\B_M}$, and  assume that $\Cc$, $\D$ and  $\Ee$ are periodic.
To prove Theorem \ref{cor1}, 
% which depends on $k$, $r$, $s$ and $b_i \in \B_M$. 
we  show that $\Gb_{r,s}^{\B_{n}}$ is empty set for each $n\geq M_0$, where
\[M_0:=
\begin{cases}
  t \cdot (N_0 + M +2) & \text{if $t\leq  M$;}\\
 t \cdot (N_0+2) & \text{if $t> M$.}
\end{cases} 
\]
By contrary, we assume that $\Gb_{r,s}^{\B_n} \not = \emptyset$ and  
$f \in \Gb_{r,s}^{\B_n}$ for some $n\geq M_0.$ We  prove that there  exists    $N_0 +1$ pairwise distinct 
polynomials $f_1, \cdots, f_{N_0}, f_{N_0+1}$ in $\Gb_{r,s}^{\B_M}$.
 This  gives a contradiction and complete the proof of  the corollary. 
Let us  assume that   $\Cc$ is periodic with period $t\geq 1$.
 For each $1\leq j \leq N_0+1$,  the following  polynomials 
$$f_j(x):=f(x+j(b_{t+1}-b_1)),$$
are distinct elements of  $\Gb_{r,s}^{\B_M}$. To see this,
 it is enough to check that $f_j(b_i)$  is an $s$-powerful element in $k$, for $1\leq i \leq M$ and $1 \leq j\leq N_0+1$. 
If $t>M$, then by definition of $f_j$'s and part (i) of  above lemma,  we have
$$f_j(b_i)=f(b_i +j (b_{t+1}-b_1)) = f(b_{j t+i}).$$
Since $j t+i \leq (N_0+1)t+M \leq (N_0+2)t\leq n $ and $f\in \Gb_{r,s}^{\B_n}$, so $f(b_{jt+i})$ and hence $f_j(b_i)$
  is an $s$-powerful element in $k$.
In the case $t\leq  M$,  for each $1\leq i \leq M$ we write $i=i_1 t+ i_2$ for some $0\leq i_1 \leq M$ and $  0 \leq i_2\leq t$. Then,
 by part (i) of  above lemma, we have 
\begin{align*}
f_j(b_i) &   =  f(b_i +j(b_{t+1}-b_1))\\
 &   =f(b_{i_2} + i_1(b_{t+1}- b_1)  + j(b_{t+1}-b_1)) \\
&  =  f(b_{i_2} + (i_1+j)(b_{t+1}- b_1))= f(b_{(i_1+j)t+i_2}).
\end{align*}
%
%$$f_j(b_i)=f(b_i +j(b_{t+1}-b_1)) = f(b_{i_2} + i_1(b_{t+1}- b_1)  + j(b_{t+1}-b_1))=f(b_{(i_1+j)t+i_2}).$$
Since $(i_1+j)t+i_2 \leq (N_0+M+1) t + t = (N_0+M+2) t\leq n$ and $f\in \Gb_{r,s}^{\B_n}$, 
so $f(b_{(i_1+j)t+i_2})$ and hence $f_j(b_i)$ is an $s$-powerful element in $k^*$.

In the cases of $\D$ or $\Ee$, one can get   result by  similar arguments. 
Indeed, for  each $f \in \Gb_{r,s}^{\B_n}$  it is enough to consider  respectively the following polynomials
$$f_j(x):=f(u x),  \ \text{with}\ u=(b_{t+1}/b_1)^{(j-1)}, \ \text{for}\ 1\leq j \leq N_0+1,$$ 
$$f_j(x):=v^r f\left( \frac{1}{v}\right),   v=\frac{1}{x}+j \left( \frac{1}{b_{t+1}} -\frac{1}{b_1}\right), \  
\ \text{for}\  1\leq j \leq N_0+1.$$
Note that given any polynomial $f \in k[x]$ of degree $ r \geq 2$ and any $c\in k$, the  function
$$g(x)=w^r f\left( \frac{1}{w}\right),\ \text{with}\ w=\frac{1}{x}+c,$$ is a polynomial of  degree $r$, too.

%%%%%%%%%%%%%%%%%%%%%%%%%%%%%%%\\
\section*{Acknowledgment}

This work is part of my Ph.D. thesis at the Universidade Federal do Rio de Janeiro,  Brazil.
 I would like to thank my
supervisor Amilcar Pacheco for his suggestions and comments  during  my Ph.D program.
I would also like to thank Fabian Pazuki for valuable comments on the first version of this paper.

%555555555555555555555555555555555555

\bibliographystyle{abbrv} 
\bibliography{Refs-pol-pow}{} 

\end{document}